\DeclareMathAlphabet{\mathpzc}{OT1}{pzc}{m}{it}
\newtheorem{theorem}{Theorem}[section]
\newtheorem{lemma}[theorem]{Lemma}
\newtheorem{proposition}[theorem]{Proposition}
\newtheorem{corollary}[theorem]{Corollary}
\newtheorem{definition}[theorem]{Definition}
\newtheorem{example}[theorem]{Example}
\newtheorem{remark}[theorem]{Remark}
\newcommand{\R}{\mathbb{R}}
\newcommand{\N}{\mathbb{N}}
\newcommand{\Z}{\mathbb{Z}}
\newcommand{\T}{\mathbb{T}}
\journal{Nonlinear Analysis: Hybrid Systems}
\begin{document}

\begin{frontmatter}
	
\title{Solutions of systems with the Caputo--Fabrizio fractional delta derivative on time scales}

\author[mymainaddress]{Dorota Mozyrska\corref{mycorrespondingauthor}}
\cortext[mycorrespondingauthor]{Corresponding author}
\ead{d.mozyrska@pb.edu.pl}

\author[mysecondaryaddress]{Delfim F. M. Torres}
\ead{delfim@ua.pt}

\author[mymainaddress]{Ma{\l}gorzata Wyrwas}
\ead{m.wyrwas@pb.edu.pl}

\address[mymainaddress]{Faculty of Computer Science, 
Bialystok University of Technology,\\ 
Wiejska 45a, 15-351 Bia{\l}ystok, Poland}

\address[mysecondaryaddress]{Center for Research and Development in Mathematics and Applications (CIDMA),
Department of Mathematics,University of Aveiro, 3810--193 Aveiro, Portugal}


\begin{abstract}
Caputo--Fabrizio fractional delta derivatives 
on an arbitrary time scale are presented. 
When the time scale is chosen to be the set 
of real numbers, then the Caputo--Fabrizio fractional 
derivative is recovered. For isolated or partly continuous 
and partly discrete, i.e., hybrid time scales, 
one gets new fractional operators. We concentrate 
on the behavior of solutions to initial value problems 
with the Caputo--Fabrizio fractional delta derivative 
on an arbitrary time scale. In particular, the exponential 
stability of linear systems is studied. A necessary and 
sufficient condition for the exponential stability of 
linear systems with the  Caputo--Fabrizio fractional 
delta derivative on time scales is presented. By considering 
a suitable fractional dynamic equation and the Laplace 
transform on time scales, we also propose a proper definition 
of Caputo--Fabrizio fractional integral on time scales. 
Finally, by using the Banach fixed point theorem, 
we prove existence and uniqueness of solution 
to a nonlinear initial value problem with 
the Caputo--Fabrizio fractional 
delta derivative on time scales.
\end{abstract}

\medskip

\begin{keyword}
Calculus on time scales
\sep
Caputo--Fabrizio fractional delta derivatives and integral
\sep
Exponential stability
\sep
Laplace transform on time scales
\sep
Existence and uniqueness of solution.

\medskip

\MSC[2010] 26A33 \sep 26E70 \sep 34D20 \sep 34N05.
\end{keyword}
\end{frontmatter}


\section{Introduction}

There is a recent interest on the development
of fractional (noninteger) calculi on arbitrary nonempty
closed subsets of the real numbers. The subject was initiated
in 2011 by using the inverse Laplace transform on time scales
\cite{MyID:201}. The first local notion of fractional
derivative for functions defined on completely arbitrary time scales
seems to have been introduced in 2015 \cite{MyID:296}. Such results were
then extended to the nabla and nonsymmetric cases in \cite{MyID:320},
while the possibility of a fractional derivative to be
a complex number is addressed in \cite{MyID:330}.
In \cite{Aydogan2017,Aydogan2018,Baleanu_BVP2017}, the authors study the 
existence of solutions for fractional differential equations including 
the Caputo--Fabrizio derivative, that is, 
they concentrate on systems with continuous time.
Existence and uniqueness results for fractional initial value problems
on arbitrary time scales are investigated in \cite{MyID:328,MyID:365}
and the so-called conformable case is studied
in \cite{MyID:379,MyID:324}. Ortigueira et al.
introduced fractional derivatives on arbitrary time scales
through convolution \cite{MyID:343}. Here we are interested to extend
the interesting approach of Caputo--Fabrizio from the time scale
$\mathbb{T} = \mathbb{R}$ into an arbitrary time scale $\mathbb{T}$.
Our results are completely new in the discrete-time or hybrid cases.
Since hybrid systems are dynamical systems 
that exhibit both continuous and discrete dynamic behaviors, we decided to use
time scales, which are recognized as a useful tool to the study of 
continuous- and discrete-time systems. Observe that instead of describing the system 
by differential or difference equations, we consider here systems with delta 
and/or nabla derivatives to write systems defined on domains that are 
partly continuous and partly discrete.

The Caputo--Fabrizio fractional derivative,
which is the convolution of the exponential function
and the first order derivative, was introduced in 2015
in \cite{Caputo:Fabrizio}, see also \cite{CF1},
with the purpose of avoiding singular kernels.
The new notion is receiving an increasing of interest.
For example, in \cite{MR3787701} a numerical approach based
on optimization theory and Ritz's method is developed to deal with
Caputo--Fabrizio Fokker--Planck equations;
applications in linear viscoelasticity are presented in \cite{MR3776052};
while practical signal processing problems are investigated in \cite{MR3770784}.
Moreover, in \cite{TateishiRibeiroLenzi2017} the authors 
show that the Caputo--Fabrizio operator may be a simple and efficient way for 
incorporating different structural aspects into the system, opening new
possibilities for modeling and investigating anomalous diffusive processes. 
Another application of the Caputo--Fabrizio fractional derivative, to describe real-world 
problems, can be found for instance in \cite{ATANGANA2018688,Atangana2018,Gomez-Aguilar2017}.
To the best of our knowledge, we are the first to consider
Caputo--Fabrizio delta derivatives and integrals
on arbitrary time scales. In this way, 
we extend the deterministic definition.

The paper is organized as follows. In Section~\ref{sec:2},
we briefly review the necessary concepts from time scales.
Our results are then given in Section~\ref{sec:3}.
Firstly, we introduce Caputo--Fabrizio fractional derivatives on time scales
in Section~\ref{subsec:3.1}. Then, in Section~\ref{subsec:3.3}, by considering a suitable
fractional differential equation and the Laplace transform on time scales,
we arrive to a proper definition of Caputo--Fabrizio fractional
integral of order $\alpha$. Next, we study the stability of linear equations
with such time-scale fractional derivatives in Section~\ref{subsec:3.2}. Finally, by using
Banach's fixed point theorem, we prove in Section~\ref{subsec:3.4}
existence and uniqueness of solution to a nonlinear fractional
initial value problem on time scales. We end with Section~\ref{sec:conc}
of conclusions, including some possible directions of future research.


\section{Preliminaries on time scales}
\label{sec:2}

We recall here basic concepts and facts of the calculus on time
scales. For more information, the reader is referred
to~\cite{Bohner}.

A {\it{time scale}} $\mathbb{T}$ is an arbitrary nonempty closed
subset of the set of real numbers~$\mathbb{R}$. The standard
examples of time scales are $\mathbb{R}$, $h \mathbb{Z}$, $h>0$,
$\mathbb{N}$, $\mathbb{N}_0$, $2^{\mathbb{N}_0}$
or $\mathbb{P}_{a,b}= \bigcup\limits_{k=0}^ \infty[k(a+b), k(a+b)+a]$.
A time scale $\mathbb{T}$ is a topological space with the
relative topology induced from $\mathbb{R}$.
Let us now recall some basic operators defined on time scales.
The  {\it forward jump operator} $\sigma : \mathbb{T} \to
\mathbb{T}$ is  defined by \(\sigma (t):= \inf \{s \in \mathbb{T} :
s>t \}\) and $\sigma \left( \sup \mathbb{T} \right)=\sup
\mathbb{T}$, if $\sup \mathbb{T} \in \mathbb{T}$.
The {\it backward jump operator}  $\rho: \mathbb{T} \to \mathbb{T}$
is defined by \(\rho(t):= \sup \{s \in \mathbb{T} : s<t \}\)
and $\rho \left(\inf \mathbb{T} \right)=\inf \mathbb{T}$,
if \mbox{$\inf \mathbb{T} \in \mathbb{T}$}.
A point $t\in\mathbb{T}$ is called \emph{right-dense},
\emph{right-scattered}, \emph{left-dense} or \emph{left-scattered}
if $\sigma(t)=t$, $\sigma(t)>t$, $\rho(t)=t$,
and $\rho(t)<t$, respectively. We say that $t$ is \emph{isolated}
if $\rho(t)<t<\sigma(t)$, that $t$ is \emph{dense} if $\rho(t)=t=\sigma(t)$.
The {\it graininess function} $\mu: \mathbb{T}
\to [0, \infty )$ is defined by \(\mu(t):= \sigma(t)-t.\)

In order to define the delta derivative, one introduces the set
\begin{equation*}
\mathbb{T}^\kappa
:=
\begin{cases}
\mathbb{T}\setminus (\rho(\sup
\mathbb{T}),\sup \mathbb{T}],
& \text{if \ $\sup \mathbb{T} <\infty$},\\
\mathbb{T}, 
& \text{if \ $\sup \mathbb{T} =\infty$}.
\end{cases}
\end{equation*}

\begin{definition}
Let $f:\mathbb{T} \to \mathbb{R}$ and $t \in \mathbb{T}^{\kappa}$.
Then the number $f^\Delta (t)$, when it exists, with the property
that, for any $\varepsilon>0$, there exists a neighborhood $U$ of
$t$ such that
\begin{equation*}
|[f(\sigma(t))-f(s)]-f^\Delta(t)[\sigma(t)-s]|
\le \varepsilon|\sigma(t)-s|, \ \forall s \in U,
\end{equation*}
is called the \emph{delta derivative of $f$ at $t$}. The function
\mbox{$f^\Delta: \mathbb{T}^{\kappa} \to \mathbb{R}$} is called
the {\emph{delta derivative}} of $f$ on $\mathbb{T}^{\kappa}$.
We~say that $f$ is {\emph {delta differentiable}} on
$\mathbb{T}^\kappa$, if  $f^\Delta(t)$ exists for all $t \in
\mathbb{T}^\kappa$.
\end{definition}

\begin{remark}
Let us consider some illustrative time scales.
\begin{enumerate}
\item If $\mathbb{T}=\mathbb{R}$, then
$f:\mathbb{R}\to \mathbb{R}$ is delta differentiable at $t\in
\mathbb{R}$ if and only if \(f^{\Delta}(t)=\lim\limits_{s\to t}
\frac{f(t)-f(s)}{t-s}=f'(t)\), i.e., $f$ is differentiable in the
ordinary sense at $t$.
\item If $\mathbb{T}=\mathbb{Z}$, then
$f:\mathbb{Z}\to \mathbb{R}$ is always delta differentiable on
$\mathbb{Z}$ with
\(f^{\Delta}(t)=\frac{f(\sigma(t))-f(t)}{\mu(t)}=f(t+1)-f(t),\) for
all $t\in\mathbb{Z}$.
\item If $\mathbb{T}=\overline{q^\mathbb{Z}}$, then $f:\mathbb{T}\to
\mathbb{R}$ is always delta differentiable on
$\mathbb{T}\setminus\{0\}$ and
\(f^{\Delta}(t)=\frac{f(qt)-f(t)}{(q-1)t},\) for all $t\in
q^\mathbb{Z}$. Moreover,
$f^\Delta(0)=\lim\limits_{s\to 0}\frac{f(s)-f(0)}{s}$,
only if this limit exists.
\end{enumerate}
\end{remark}

\begin{definition}
A function $f:\mathbb{T} \to \mathbb{R}$ is called
{\emph{regulated}} if its right-side limits exist (finite) at all
right-dense points in $\mathbb{T}$ and its left-side limits exist
(finite) at all left-dense points in $\mathbb{T}$.
\end{definition}

\begin{definition}
A function $f:\mathbb{T} \to \mathbb{R}$ is called
{\emph{rd-continuous}} if it is continuous at the right-dense
points in $\mathbb{T}$ and its left-sided limits exist at all
left-dense points in $\mathbb{T}$.
\end{definition}

\begin{definition}
A continuous function $f:\mathbb{T}\to \mathbb{R}$ is
\emph{pre-differentiable} with $D$ (the region of
differentiation), if $D\subset \mathbb{T}^\kappa$,
$\mathbb{T}^\kappa\setminus D$ is countable and contains no
right-scattered elements of $\mathbb{T}$ and $f$ is differentiable
at each $t\in D$.
\end{definition}

If $f$ is regulated, then there exists a function $F$ that is
pre-differentiable with the region of differentiation $D$, such
that \( F^\Delta (t)=f(t)\) for all $t\in D$.
Any function $F$ that satisfies \( F^\Delta (t)=f(t)\) is called a
\emph{pre-antiderivative} of $f$. Then the \emph{indefinite
integral} of a regulated function $f$ is defined by
\(
\int f(t)\Delta t=F(t)+C,
\)
where $C$ is an arbitrary constant. The \emph{Cauchy integral} of a
regulated function $f$ is defined by
\(
\int\limits_{r}^{s} f(t)\Delta t=F(s)-F(r),
\)
for all $s,t\in \mathbb{T}$. A function $F:\mathbb{T}\to
\mathbb{R}$ is called an \emph{antiderivative of $f:\mathbb{T}\to
\mathbb{R}$} if it satisfies \( F^\Delta (t)=f(t), \)
for all $t\in \mathbb{T}^\kappa$.

It is known that every rd-continuous function has an
antiderivative, so it is delta-integrable \cite{Bohner}.
The set of rd-continuous functions on the time scale $\mathbb{T}$
is denoted by $C_{rd}(\mathbb{T})$.

\begin{example}
If $\mathbb{T}=\mathbb{R}$, then $\int\limits_{a}^{b} f(t)
\Delta t=\int\limits_{a}^{b} f(t) d t$, where the integral
on the right-hand side is the usual Riemann integral.
\end{example}

\begin{example}
If $\mathbb{T}=h\mathbb{Z}$, where $h>0$, then
$\int\limits_{a}^{b}  f(t) \Delta t
=\sum\limits_{k=\frac{a}{h}}^{\frac{b}{h}-1}
h\cdot f(kh)$, $a<b$.
\end{example}

Now, let us recall the concept of exponential function on a time scale.
The function $p: \mathbb{T}\to \mathbb{R}$ is called \emph{regressive},
if $1+p(t)\mu(t)\neq 0$ for all $t\in \mathbb{T}^{\kappa}$.
The set of all regressive and rd-continuous functions $f: \mathbb{T}
\to \mathbb{R}$ is denoted by $\mathcal{R}$.

Let us consider the following linear delta differential
equation on a given time scale $\mathbb{T}$:
\begin{equation}
\label{equation-linear}
x^{\Delta}(t)=p(t) x(t),
\end{equation}
where $t\in\mathbb{T}$ and $x(t)\in \mathbb{R}$.
If $p\in \mathcal{R}$, then one defines the
{\em exponential function $e_p(\cdot,t_0)$ on the time scale $\mathbb{T}$} by
\begin{equation*}
 e_p(t,t_0)=
\begin{cases}
\exp \left(\int_{t_0}^t p(s)ds\right),
&\text{for $t\in \mathbb{T}$, $\mu=0$;}\\
\exp \left(\int_{t_0}^t \frac{{\rm Log}
(1+\mu(s)p(s))}{\mu (s)}\Delta s\right),
&\text{for $t\in \mathbb{T}$, $\mu>0$,}
\end{cases}
\end{equation*}
where ${\rm Log}$ is the principal logarithm function.

\begin{theorem}[See \cite{Bohner}]
Let $t_0 \in \mathbb{T}$ and $x_0 \in \mathbb{R}$. Then,
system \eqref{equation-linear} with initial condition $x(t_0)=x_0$ has a
unique solution $x: [t_0,+\infty)\cap\mathbb{T} \rightarrow \mathbb{R}$.
\end{theorem}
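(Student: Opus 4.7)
The plan is to exhibit the explicit candidate $x(t) := x_0\, e_p(t, t_0)$ as the unique solution. Existence will reduce to verifying that the time-scale exponential satisfies $e_p^\Delta(\cdot, t_0) = p\, e_p(\cdot, t_0)$ with $e_p(t_0, t_0) = 1$, and uniqueness will follow from a multiplier argument using the inverse of $p$ in the regressive group.

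For existence, I would differentiate the defining formula
\[
e_p(t, t_0) = \exp\!\left(\int_{t_0}^{t} \xi_{\mu(s)}(p(s))\,\Delta s\right),
\qquad \xi_h(z) :=
\begin{cases}
z, & h = 0,\\
\frac{1}{h}\,\mathrm{Log}(1 + h z), & h > 0,
\end{cases}
\]
separating the two cases in the definition of the delta derivative. At a right-dense $t$ the ordinary chain rule applies; at a right-scattered $t$ one computes the jump
\[
\frac{e_p(\sigma(t), t_0) - e_p(t, t_0)}{\mu(t)}
= e_p(t, t_0)\,\frac{\exp\!\bigl(\mathrm{Log}(1 + \mu(t) p(t))\bigr) - 1}{\mu(t)}
= p(t)\, e_p(t, t_0),
\]
where regressivity $1 + \mu(t) p(t) \neq 0$ ensures the principal logarithm is well defined, and rd-continuity of $p$ guarantees that the Cauchy integral exists on $[t_0, \infty) \cap \mathbb{T}$. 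Evaluating at $t = t_0$ gives $x(t_0) = x_0$, completing existence.

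For uniqueness, suppose $y$ is another solution on $[t_0,\infty)\cap\mathbb{T}$. Introduce the regressive inverse $\ominus p := -p/(1 + \mu p)$, which again belongs to $\mathcal{R}$, and set $w(t) := y(t)\, e_{\ominus p}(t, t_0)$. Applying the time-scale product rule together with the algebraic identity $p \oplus (\ominus p) := p + \ominus p + \mu\, p\, (\ominus p) = 0$, a direct computation yields $w^\Delta \equiv 0$ on $\mathbb{T}^\kappa$, and since $w$ is continuous, $w$ is constant on $[t_0, \infty)\cap\mathbb{T}$. The initial condition forces $w \equiv x_0$, hence $y \equiv x_0\, e_p(\cdot, t_0) = x$.

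The main obstacle I anticipate is the unified verification of $e_p^\Delta = p\, e_p$ across dense and scattered points; the formula looks purely analytic but must collapse to the multiplicative jump $1 + \mu(t) p(t)$ at scattered points, and this is exactly where the hypothesis $p \in \mathcal{R}$ is indispensable — without it the logarithm, and therefore $e_p$ itself, would not be well defined. Once that is in hand, the rest is a routine application of the product rule and the group structure $(\mathcal{R}, \oplus)$.
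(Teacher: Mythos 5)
The paper offers no proof of this theorem at all — it is simply quoted from Bohner and Peterson — and your argument is exactly the standard proof in that cited reference: show that $x_0e_p(\cdot,t_0)$ solves the equation via the cylinder-transformation formula for $e_p$, then get uniqueness by multiplying an arbitrary solution by $e_{\ominus p}(\cdot,t_0)$, using the product rule, $p\oplus(\ominus p)=0$, and $e_{\ominus p}=1/e_p$. Your reasoning is sound provided you make explicit the standing hypothesis $p\in\mathcal{R}$ (rd-continuity plus regressivity), which the paper's statement leaves implicit, and provided you replace the phrase ``the ordinary chain rule applies'' at right-dense points by the honest time-scale argument: with $F(t)=\int_{t_0}^{t}\xi_{\mu(\tau)}(p(\tau))\,\Delta\tau$, which is delta-differentiable with $F^{\Delta}(t)=p(t)$ because the cylinder-transformed integrand is rd-continuous, one estimates $\frac{\exp(F(t))-\exp(F(s))}{t-s}$ via the mean value theorem for $\exp$ between the real numbers $F(s)$ and $F(t)$, since the points of $\mathbb{T}$ near a right-dense $t$ need not form an interval and the classical chain rule is not directly available.
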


\begin{theorem}[See \cite{Bohner}]
Let $t_0 \in \mathbb{T}$ and $p\in \mathcal{R}$. Then $e_p(\cdot,t_0)$
is the solution of the initial value problem
\begin{equation*}
x^{\Delta}(t)=p(t) x(t), \qquad x(t_0)=1.
\end{equation*}
\end{theorem}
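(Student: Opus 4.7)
\medskip

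\noindent\textbf{Proof plan.} The plan is to verify the two required conditions directly from the piecewise definition of $e_p(\cdot,t_0)$. The initial condition is immediate: at $t=t_0$, the integrand is integrated over an empty interval, so $e_p(t_0,t_0)=\exp(0)=1$. The substance of the theorem is the differential identity $e_p^{\Delta}(t)=p(t)\,e_p(t,t_0)$ for $t\in\mathbb{T}^\kappa$, which I would establish by splitting into the right-scattered and right-dense cases.

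First I would handle the right-scattered case, $\mu(t)>0$. The natural target identity is the multiplicative ``jump relation''
\begin{equation*}
e_p(\sigma(t),t_0) \;=\; \bigl(1+\mu(t)p(t)\bigr)\,e_p(t,t_0),
\end{equation*}
since once this is in hand, the definition $f^{\Delta}(t)=\bigl(f(\sigma(t))-f(t)\bigr)/\mu(t)$ at a right-scattered point yields $e_p^{\Delta}(t)=p(t)\,e_p(t,t_0)$ after a single line of algebra. To obtain the jump relation, I would write $e_p(\sigma(t),t_0)=e_p(t,t_0)\cdot\exp\!\Bigl(\int_t^{\sigma(t)}\tfrac{\mathrm{Log}(1+\mu(s)p(s))}{\mu(s)}\Delta s\Bigr)$, evaluate the single-step delta integral over $[t,\sigma(t))$ as $\mu(t)\cdot\tfrac{\mathrm{Log}(1+\mu(t)p(t))}{\mu(t)}=\mathrm{Log}(1+\mu(t)p(t))$, and then exponentiate, invoking the regressivity assumption $1+\mu(t)p(t)\neq 0$ to guarantee that the principal logarithm is defined.

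For the right-dense case, $\mu(t)=0$, I would show that $e_p(\cdot,t_0)$ is classically differentiable at $t$ with derivative $p(t)\,e_p(t,t_0)$, which coincides with $e_p^{\Delta}(t)$ since $\sigma(t)=t$. The argument is to write $e_p(t,t_0)=\exp(G(t))$ where $G(t)=\int_{t_0}^{t}\xi_{\mu(s)}(p(s))\,\Delta s$ with $\xi_h(z)=\mathrm{Log}(1+hz)/h$ for $h>0$ and $\xi_0(z)=z$, apply the fundamental theorem of calculus on time scales to get $G^{\Delta}(t)=\xi_{\mu(t)}(p(t))=p(t)$ at the right-dense point $t$, and then compose with $\exp$ using the classical chain rule locally. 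Because $p$ is rd-continuous and $\xi_h(z)\to z$ uniformly on compacts as $h\to 0$, one controls the limit $\lim_{s\to t}\bigl(e_p(t,t_0)-e_p(s,t_0)\bigr)/(t-s)$ even when the approaching points $s$ are themselves right-scattered.

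The main obstacle I anticipate is precisely this last point: at a right-dense $t$ that is an accumulation point of right-scattered elements of $\mathbb{T}$, the difference quotient mixes two branches of the piecewise definition, so one cannot simply cite the real-variable chain rule. The cleanest way to close this gap is to use the jump relation from the scattered case together with the continuity of $e_p(\cdot,t_0)$ and of $p$ to pass to the limit $s\to t$, thereby unifying both cases into the single identity $e_p^{\Delta}(t)=p(t)\,e_p(t,t_0)$. Uniqueness is then automatic from the previous theorem applied with $x_0=1$.
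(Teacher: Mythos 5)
The paper gives no proof of this statement---it is quoted directly from Bohner and Peterson---and your plan is essentially the standard argument found there: the jump relation $e_p(\sigma(t),t_0)=\bigl(1+\mu(t)p(t)\bigr)e_p(t,t_0)$ at right-scattered points, and at right-dense points the fundamental theorem of calculus on time scales applied to $G(t)=\int_{t_0}^{t}\xi_{\mu(s)}(p(s))\,\Delta s$ followed by composition with $\exp$, plus uniqueness from the preceding existence--uniqueness theorem. Your closing worry about right-dense points that accumulate right-scattered ones is resolved not by the jump relation but by checking that $s\mapsto\xi_{\mu(s)}(p(s))$ is rd-continuous (at a right-dense $t$ one has $\mu(s)\to 0$ as $s\to t$, and $\xi_h(z)\to z$), so $G^{\Delta}(t)=p(t)$ there by the fundamental theorem, and the difference quotient of $\exp\circ G$ factors through that of $G$ with no case analysis on neighbouring points; this is a refinement of your sketch, not a gap.
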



\section{Main results}
\label{sec:3}

We define the Caputo--Fabrizio (CF, for short)
fractional delta derivatives 
on time scales of order $\alpha\in[0,1)$:
left-sided and right-sided, with the scalar exponential function of time scales
as the kernel. For brevity, we restrict ourselves to the delta calculus.
However, similar definitions and results follow easily for the nabla case
by using the duality of fractional calculus and time scales
\cite{MR3662975,MyID:307,MR2957726}. Analogously, other
new notions of fractional delta derivatives
with respect to the matrix exponential function, as well
as higher-order derivatives, are also introduced.

\begin{definition}
Let $\T$ be a time scale, $\alpha\in (q-1,q]$,
where $q\in\N_0$, and $a,b\in \R$ with $a<b$.
We denote by $CF^\alpha[a,b]$ the class of functions
$f:[a,b]\rightarrow \R$ for which both delta integrals
$ \int_a^t f^{\Delta^q}(\tau) e_{\bar{\beta}}(t, \sigma(\tau))\Delta \tau$
and $ \int_t^b f^{\Delta^q}(\tau) e_{\bar{\beta}}(t, \sigma(\tau))\Delta \tau$,
where $\bar{\beta}=\frac{\beta}{\beta-1}$, $\beta=\alpha -q+1$, exist.
\end{definition}


\subsection{CF fractional delta derivative of order $\alpha \in [0,1)$}
\label{subsec:3.1}

As usual in fractional calculus, we introduce two notions
of fractional derivatives: left-sided and right-sided.

\begin{definition}[Caputo--Fabrizio fractional delta derivatives on time scales]
Let $\mathbb{T}$ be a time scale, $\alpha \in [0,1)$, $f\in CF^\alpha[a,b]$,
and $M$ a given function satisfying $M(0) = M(1) = 1$.
The left-sided Caputo--Fabrizio fractional delta derivative is defined by
\begin{equation}
\label{def:Caputo_onTS_l}
\left(_a^{CF}\Delta_t^{(\alpha)} f\right)(t)
:=\frac{M(\alpha)}{1 - \alpha} \int_a^t f^{\Delta}(\tau)
e_{\bar{\alpha}}(t, \sigma(\tau))\Delta \tau
\end{equation}
and the  right-sided Caputo--Fabrizio fractional delta derivative is defined by
\begin{equation}
\label{def:Caputo_onTS_r}
\left(_t^{CF}\Delta_b^{(\alpha)} f\right)(t)
:=\frac{M(\alpha)}{1 - \alpha} \int_t^b f^{\Delta}(\tau)
e_{\bar{\alpha}}(t, \sigma(\tau))\Delta \tau,
\end{equation}
where $\bar{\alpha} = \frac{\alpha}{\alpha-1}$.
\end{definition}

\begin{remark}
For $a=0$, the left-sided Caputo--Fabrizio fractional delta derivative
defined by \eqref{def:Caputo_onTS_l} has the form of convolution on a time scale:
\begin{equation*}
\left(_0^{CF}\Delta_t^{(\alpha)} f\right)(t)
=\frac{M(\alpha)}{1 - \alpha} \left(f^\Delta * e_{\bar{\alpha}} \right)(t).
\end{equation*}
\end{remark}

\begin{example}
When $\mathbb{T} = h\mathbb{Z}$, we get a new notion of fractional
difference. Namely, for $a,t\in h\mathbb{Z}$, we have
\begin{equation*}
\left(_a^{CF}\Delta_t^{(\alpha)} f\right)(t)
=\frac{M(\alpha)}{1 - \alpha} \sum_{k=\frac{a}{h}}^{\frac{t}{h}-1}
h f^{\Delta}(kh) \left(1+h\bar{\alpha} \right)^{\frac{t}{h}-k-1}.
\end{equation*}
\end{example}

\begin{lemma}
\label{transform}
Let $\alpha\in [0,1)$ and $F(z)=\mathcal{L}\left\{f\right\}(z)$. Then,
\begin{equation*}
\mathcal{L}\left\{^{CF}\Delta_t^{(\alpha)} f\right\}(z)
=M(\alpha)\frac{zF(z)-f(0)}{(1-\alpha)z+\alpha}.
\end{equation*}
\end{lemma}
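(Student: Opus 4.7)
The plan is to reduce the statement to three standard facts about the Laplace transform on time scales: (i) the transform of a delta derivative, $\mathcal{L}\{f^\Delta\}(z)=zF(z)-f(0)$; (ii) the transform of the scalar exponential, $\mathcal{L}\{e_p(\cdot,0)\}(z)=\frac{1}{z-p}$ for a constant regressive $p$; and (iii) the convolution theorem, $\mathcal{L}\{f*g\}(z)=\mathcal{L}\{f\}(z)\,\mathcal{L}\{g\}(z)$. Each of these is available in the time-scale Laplace calculus literature cited in the introduction.

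First I would invoke the remark immediately above the statement to rewrite the left-sided Caputo--Fabrizio fractional delta derivative based at $a=0$ as a convolution:
\begin{equation*}
\bigl(\,{}^{CF}\Delta_t^{(\alpha)}f\bigr)(t)=\frac{M(\alpha)}{1-\alpha}\bigl(f^\Delta * e_{\bar{\alpha}}\bigr)(t),
\qquad \bar{\alpha}=\tfrac{\alpha}{\alpha-1}.
\end{equation*}
Taking the Laplace transform on both sides and applying (iii) yields
\begin{equation*}
\mathcal{L}\bigl\{\,{}^{CF}\Delta_t^{(\alpha)}f\bigr\}(z)
=\frac{M(\alpha)}{1-\alpha}\,\mathcal{L}\{f^\Delta\}(z)\cdot \mathcal{L}\{e_{\bar{\alpha}}(\cdot,0)\}(z).
\end{equation*}
Substituting (i) and (ii) with $p=\bar{\alpha}$ then gives
\begin{equation*}
\mathcal{L}\bigl\{\,{}^{CF}\Delta_t^{(\alpha)}f\bigr\}(z)
=\frac{M(\alpha)}{1-\alpha}\cdot \bigl(zF(z)-f(0)\bigr)\cdot \frac{1}{z-\bar{\alpha}}.
\end{equation*}

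The last step is an algebraic simplification of the prefactor. Since
\begin{equation*}
(1-\alpha)(z-\bar{\alpha})=(1-\alpha)z-(1-\alpha)\,\tfrac{\alpha}{\alpha-1}=(1-\alpha)z+\alpha,
\end{equation*}
the expression collapses to $M(\alpha)\frac{zF(z)-f(0)}{(1-\alpha)z+\alpha}$, which is exactly the claim.

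The main obstacle I foresee is not the computation itself but the justification of the supporting results in the time-scale setting: one must verify that $\bar{\alpha}$ is regressive (so that $e_{\bar{\alpha}}(\cdot,0)$ is defined and Laplace-transformable) and that $f$ is regular enough for the transform of $f^\Delta$ and the convolution theorem to apply. Regressivity of the constant $\bar{\alpha}=\alpha/(\alpha-1)$ amounts to $1+\mu(t)\bar{\alpha}\neq 0$, i.e., $(1-\alpha)+\mu(t)\alpha\neq 0$ pointwise on $\mathbb{T}^\kappa$; since $\alpha\in[0,1)$ and $\mu(t)\geq 0$, this sum is strictly positive, so $\bar{\alpha}\in\mathcal{R}$ uniformly on any time scale. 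The remaining hypotheses are absorbed into the standing assumption $f\in CF^\alpha[a,b]$ together with the usual domain of the Laplace transform, so no further delicate argument is needed.
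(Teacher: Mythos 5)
Your route is essentially the paper's own: the published proof is a one-line citation to adapting the Laplace--convolution arguments of \cite{Bohner_Lap} and \cite{Davis_Lap}, and your chain --- the convolution representation of $\bigl({}_0^{CF}\Delta_t^{(\alpha)}f\bigr)$, the time-scale convolution theorem, $\mathcal{L}\{f^\Delta\}(z)=zF(z)-f(0)$, $\mathcal{L}\{e_{\bar{\alpha}}(\cdot,0)\}(z)=\frac{1}{z-\bar{\alpha}}$, and the identity $(1-\alpha)(z-\bar{\alpha})=(1-\alpha)z+\alpha$ --- is a correct, more explicit write-up of exactly that computation.

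One point in your closing verification is wrong, however: the regressivity of $\bar{\alpha}$ is not automatic, and your computation contains a sign error. Since $\bar{\alpha}=\frac{\alpha}{\alpha-1}\le 0$, one has
\begin{equation*}
1+\mu(t)\bar{\alpha}=\frac{(1-\alpha)-\mu(t)\alpha}{1-\alpha},
\end{equation*}
not $\frac{(1-\alpha)+\mu(t)\alpha}{1-\alpha}$; this vanishes precisely when $\mu(t)=\frac{1-\alpha}{\alpha}$. For instance, $\alpha=\tfrac12$ on $\mathbb{T}=\mathbb{Z}$ gives $\bar{\alpha}=-1$, which is not regressive. So the claim that $\bar{\alpha}\in\mathcal{R}$ ``uniformly on any time scale'' is false; it must instead be taken as a hypothesis (equivalently, a restriction on the graininess of $\mathbb{T}$ relative to $\alpha$) for $e_{\bar{\alpha}}$ and the formula $\mathcal{L}\{e_{\bar{\alpha}}(\cdot,0)\}(z)=\frac{1}{z-\bar{\alpha}}$ to be available --- a condition the paper itself also leaves implicit. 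With that hypothesis stated (and the usual growth/domain conditions on $z$ for the transform and convolution theorem), your argument goes through.
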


\begin{proof}
Follows by adapting the proofs of the Laplace transform
of convolution on time scales found in \cite{Bohner_Lap}
and \cite{Davis_Lap}.
\end{proof}

\begin{proposition}
Let $f\in FC^{\alpha}[a,b]$ for $\alpha \in [0,1)$.
\begin{enumerate}
\item If $\alpha=0$, then the left-sided Caputo--Fabrizio
fractional delta derivative \eqref{def:Caputo_onTS_l}
becomes $\left(\mbox{}_a^{CF}\Delta_t^{(0)} f \right)(t)= f(t)-f(a)$
and $\left(_t^{CF}\Delta_b^{(0)} f \right)(t)= f(b)-f(t)$.

\item If $\alpha$ tends to 1 from the left side, then the Caputo--Fabrizio
fractional delta derivatives \eqref{def:Caputo_onTS_l} and \eqref{def:Caputo_onTS_r}
tend to the usual delta derivative of time scales,
that is, $\lim\limits_{\alpha\rightarrow 1^-}\left(\mbox{}_a^{CF}
\Delta_t^{(\alpha)} f\right) (t)= f^\Delta(t)$
and $\lim\limits_{\alpha\rightarrow 1^-}\left(\mbox{}_t^{CF}
\Delta_b^{(\alpha)} f\right)(t)=f^\Delta(t)$.
\end{enumerate}
\end{proposition}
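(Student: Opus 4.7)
For Part 1 my plan is direct substitution. At $\alpha=0$ the normalization hypothesis $M(0)=1$ forces $M(\alpha)/(1-\alpha)=1$, while $\bar\alpha = 0/(0-1) = 0$. Since the time-scale exponential satisfies $e_0(t,\cdot)\equiv 1$ (it is the unique solution of $x^\Delta=0$ taking the value $1$ at any base point), the integrand in \eqref{def:Caputo_onTS_l} collapses to $f^\Delta(\tau)$, and the fundamental theorem of delta calculus gives $\int_a^t f^\Delta(\tau)\,\Delta\tau = f(t)-f(a)$. The same computation applied to \eqref{def:Caputo_onTS_r} yields $f(b)-f(t)$.

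For Part 2 I would argue that the kernel $K_\alpha(t,\tau):=\frac{M(\alpha)}{1-\alpha}\,e_{\bar\alpha}(t,\sigma(\tau))$ behaves as an approximate identity concentrated at $\tau=t$ as $\alpha\to 1^-$. The exponent $\bar\alpha = \alpha/(\alpha-1)$ tends to $-\infty$, so $e_{\bar\alpha}(t,\sigma(\tau))$ decays sharply for $\tau$ bounded away from $t$. For the total mass, the time-scale antiderivative identity
\[
\int_a^t e_{\bar\alpha}(t,\sigma(\tau))\,\Delta\tau = \frac{1}{\bar\alpha}\bigl(e_{\bar\alpha}(t,a)-1\bigr)
\]
(obtained from the product rule together with $e_p^\Delta(t,\cdot)$ applied in the second slot) yields $\int_a^t K_\alpha(t,\tau)\,\Delta\tau = \frac{M(\alpha)}{\alpha}\bigl(1-e_{\bar\alpha}(t,a)\bigr) \to 1$. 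Combined with rd-continuity of $f^\Delta$ at $t$, a standard localization argument then gives $\int_a^t f^\Delta(\tau)K_\alpha(t,\tau)\,\Delta\tau \to f^\Delta(t)$, and the right-sided case is symmetric.

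A slicker alternative, at least when $a=0$, is to invoke Lemma~\ref{transform}: it gives $\mathcal{L}\{{}^{CF}\Delta_t^{(\alpha)} f\}(z) = M(\alpha)\frac{zF(z)-f(0)}{(1-\alpha)z+\alpha}$, which converges to $zF(z)-f(0) = \mathcal{L}\{f^\Delta\}(z)$ as $\alpha\to 1^-$, and injectivity of the time-scale Laplace transform then closes the argument. The main obstacle is Part~2 on a fully general time scale: at a right-scattered point with $\mu(\tau)>0$ one has $1+\mu(\tau)\bar\alpha<0$ for $\alpha$ close to $1$, so $e_{\bar\alpha}$ fails to be positive and the naive approximate-identity picture breaks down. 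On such a time scale I would replace the localization step by a term-by-term analysis (an Abel-type summation in the purely discrete regime) to control the sign oscillations and recover the same limit.
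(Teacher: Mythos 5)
Your Part~1 and your ``slicker alternative'' for Part~2 are exactly the paper's route: the paper dismisses Part~1 as obvious (your computation with $M(0)=1$, $\bar\alpha=0$, $e_0\equiv 1$ and the fundamental theorem of delta calculus is just that argument written out), and for Part~2 the paper's proof consists precisely of ``computing the limits and making use of Lemma~\ref{transform}'', i.e.\ the observation that $M(\alpha)\frac{zF(z)-f(0)}{(1-\alpha)z+\alpha}\to zF(z)-f(0)=\mathcal{L}\{f^\Delta\}(z)$ as $\alpha\to 1^-$. So on that track you are aligned with the paper.

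Your primary route for Part~2, the approximate-identity argument, contains a genuine gap that the promised ``Abel-type/term-by-term'' repair cannot close: at right-scattered points the difficulty is not merely loss of positivity of the kernel, but that the quantity whose limit you are computing actually diverges. Take $\T=\Z$, $a=0$, $f(t)=t$ and $t=1$: definition \eqref{def:Caputo_onTS_l} gives exactly $\frac{M(\alpha)}{1-\alpha}$, which tends to $+\infty$, not to $f^\Delta(1)=1$. More generally, with $\mu\equiv 1$ one has $1+\bar\alpha=2-\frac{1}{1-\alpha}\to-\infty$, so $e_{\bar\alpha}(t,a)=(1+\bar\alpha)^{t-a}$ oscillates in sign and grows like $(1-\alpha)^{-(t-a)}$; consequently your ``total mass'' $\frac{M(\alpha)}{\alpha}\bigl(1-e_{\bar\alpha}(t,a)\bigr)$ does not tend to $1$ but blows up. Since for fixed $t$ the delta integral on an isolated time scale is a finite sum, no resummation device can change its value, so the localization argument is only valid where the exponential genuinely decays, e.g.\ on $\T=\R$ (or at points reached through a right-dense stretch), where $e_{\bar\alpha}(t,a)=e^{\bar\alpha(t-a)}\to 0$ and your kernel computation is correct and gives an honest, transform-free proof. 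If you instead fall back on the Laplace-transform computation, be aware that passing from convergence of the transforms to pointwise convergence of $\bigl({}_a^{CF}\Delta_t^{(\alpha)}f\bigr)(t)$ requires interchanging the limit $\alpha\to 1^-$ with Laplace inversion, and your own $\Z$ example shows this interchange cannot be valid pointwise at scattered points; this caveat is left implicit in the paper's one-line proof as well, so either state it or restrict the pointwise claim in Part~2 to right-dense points (in particular $\T=\R$).
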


\begin{proof}
The first part for $\alpha=0$ is obvious. For the second part,
we compute the limits and make use
of Lemma~\ref{transform}.
\end{proof}


\subsection{CF delta integrals of order $\alpha \in [0,1)$}
\label{subsec:3.3}

From now on, let $\T$ be an unbounded time scale.
In this section, using Laplace transform methods on time scales,
we present the solution to linear scalar equations.

Let $\alpha\in (0,1)$. Consider the following
delta-fractional differential equation:
\begin{equation}
\label{non1}
\left(^{CF}\Delta_t^{(\alpha)} x\right)(t)=u(t),
\quad t\in\T.
\end{equation}
Using the Laplace transform on time scales, we easily get
\begin{equation*}
x(t)=x(0)+\frac{1-\alpha}{M(\alpha)}(u(t)-u(0))
+\frac{\alpha}{M(\alpha)}\int_0^t u(\tau)\Delta \tau,
\quad t\geq 0.
\end{equation*}
It also means that any function defined as
\begin{equation*}
y(t)= c+\frac{1-\alpha}{M(\alpha)}u(t)+\frac{\alpha}{M(\alpha)}
\int_0^t u(\tau)\Delta \tau, \quad t\geq 0,
\end{equation*}
where $c\in\R$ is a constant, is also a solution of \eqref{non1}.
We can rewrite the delta-fractional equation \eqref{non1} as
\begin{equation*}
\int_0^t e_{\bar{\alpha}}(t,\sigma(\tau))x^{\Delta}(\tau)\Delta \tau
=\frac{1-\alpha}{M(\alpha)}u(t), \quad t\geq 0.
\end{equation*}
Moreover, as $e_{\bar{\alpha}}(t,\sigma(\tau))
=e_{\bar{\alpha}}(t,0)e_{\bar{\alpha}}(0,\sigma(\tau))$
and $e_{\bar{\alpha}}(0,t)=e^{-1}_{\bar{\alpha}}(t,0)$,
we can further write that
\begin{equation*}
\int_0^t e_{\bar{\alpha}}(0,\sigma(\tau))x^{\Delta}(\tau)\Delta \tau
=\frac{1-\alpha}{M(\alpha)} e_{\bar{\alpha}}(0,t)u(t),
\quad t\geq 0.
\end{equation*}
Then, taking the delta-derivative on both sides, we have
\begin{equation*}
e_{\bar{\alpha}}(0,\sigma(t))x^{\Delta}(\tau)
=\frac{1-\alpha}{M(\alpha)}
\left( e_{\bar{\alpha}}(0,t)u(t) \right)^\Delta.
\end{equation*}
As $\left( e_{\bar{\alpha}}(0,t)u(t) \right)^\Delta
=e^\Delta_{\bar{\alpha}}(0,t)u(t)
+e_{\bar{\alpha}}(0,\sigma(t))u^\Delta(t)$, then
\begin{equation*}
e_{\bar{\alpha}}(0,\sigma(t))x^{\Delta}(\tau)=\frac{1-\alpha}{M(\alpha)}
\left(e^\Delta_{\bar{\alpha}}(0,t)u(t)
+e_{\bar{\alpha}}(0,\sigma(t))u^\Delta(t)\right)
\end{equation*}
and
\begin{equation*}
x^{\Delta}(\tau)=\frac{1-\alpha}{M(\alpha)}
\frac{e^\Delta_{\bar{\alpha}}(0,t)}{e_{\bar{\alpha}}(0,\sigma(t))}u(t)
+\frac{1-\alpha}{M(\alpha)}u^\Delta(t).
\end{equation*}
Using the properties of delta derivatives on time scales,
direct calculations show that
\begin{equation*}
\frac{e^\Delta_{\bar{\alpha}}(0,t)}{e_{\bar{\alpha}}(0,\sigma(t))}
=-\bar{\alpha}=\frac{\alpha}{1-\alpha}.
\end{equation*}
Hence,
\begin{equation*}
x^{\Delta}(\tau)=\frac{\alpha}{M(\alpha)} u(t)
+\frac{1-\alpha}{M(\alpha)}u^\Delta(t),
\end{equation*}
which, after delta integration, gives
\begin{equation*}
x(t)=x(0)+\frac{\alpha}{M(\alpha)}\int_0^t u(\tau)\Delta \tau
+\frac{1-\alpha}{M(\alpha)}\left(u(t)-u(0)\right),
\quad t\geq 0.
\end{equation*}
Hence, similarly as in \cite{Nieto}, we can formulate
the fractional delta-integral of Caputo--Fabrizio type as follows.

\begin{definition}
Let $\alpha\in(0,1)$. The fractional delta-integral of order
$\alpha$ of a function $u$ is defined by
\begin{equation*}
\left(^{CF}I^\alpha u \right)(t)
:=\frac{1-\alpha}{M(\alpha)}u(t)+\frac{\alpha}{M(\alpha)}
\int_0^t u(\tau)\Delta \tau,
\quad t\geq 0.
\end{equation*}
\end{definition}

We can also consider, as an extension of \cite{Nieto},
that the Caputo--Fabrizio fractional delta-integral of order $\alpha$
of a function $u$ is an average between function $u$
and its delta integral of order one:
\begin{equation*}
\frac{1-\alpha}{M(\alpha)}+ \frac{\alpha}{M(\alpha)}=1.
\end{equation*}
From this, we have $M(\alpha)=1$ and formula \eqref{def:Caputo_onTS_l}
in the definition of left-sided Caputo--Fabrizio fractional derivative
on time scales started at $t_0=0$ can be stated as
\begin{equation*}
\left(^{CF}\Delta_t^{(\alpha)} f\right)(t)
:=\frac{1}{1 - \alpha} \int_0^t f^{\Delta}(\tau)
e_{\bar{\alpha}}(t, \sigma(\tau))\Delta \tau,
\end{equation*}
where $\bar{\alpha} = \frac{\alpha}{\alpha-1}$.


\subsection{Solutions of linear equations with CF fractional delta derivatives and their stability}
\label{subsec:3.2}

Now, let us consider the following linear scalar equation:
\begin{equation}
\label{scalar}
\left(^{CF}\Delta_t^{(\alpha)} x\right)(t)=\lambda x(t)+u(t),
\quad \lambda \in \R, \quad t\in\T,
\end{equation}
with initial condition $x(0)=x_0\in\R$. For simplicity,
we use $t_0=0$, which is assumed to belong to $\T$.
Here, we also use $M(\alpha)=1$, accordingly 
to results from Section~\ref{subsec:3.3}.

\begin{definition}
Let $\alpha\in [0,1)$. We say that $\lambda\in\R$ is 
$CF(\alpha)$-fractionally regressive
if $K(\alpha):=1- \lambda(1-\alpha)\neq 0$.
\end{definition}

\begin{theorem}
Let $\alpha\in [0,1)$ and $\T$ be an unbounded time scale
with $t_0=0\in\T$, $K(\alpha)\neq 0$. Then, equation
\eqref{scalar} subject to the initial condition $x(0)=x_0\in\R$
has the unique solution
\begin{equation}
\label{solution} 
\begin{split}
x(t)&= x(0)-\frac{1}{K(\alpha)}\left(1-e_{p(\alpha)}(t,0)\right)x(0)
+\frac{1-\alpha}{K(\alpha)}(u(t)-u(0))\\ 
& \qquad + \frac{\alpha}{K^2(\alpha)}\int_0^t
e_{p(\alpha)}(t,\sigma(\tau))u(\tau)\Delta \tau,
\end{split}
\end{equation}
where $p(\alpha)=\frac{\lambda\alpha}{K(\alpha)}$.
\end{theorem}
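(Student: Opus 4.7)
The natural approach is to apply the Laplace transform on time scales to both sides of \eqref{scalar}, using Lemma~\ref{transform} with $M(\alpha)=1$. Writing $X(z)=\mathcal{L}\{x\}(z)$ and $U(z)=\mathcal{L}\{u\}(z)$, this converts \eqref{scalar} into the algebraic identity
\[
\frac{zX(z)-x(0)}{(1-\alpha)z+\alpha}=\lambda X(z)+U(z).
\]
Clearing denominators and collecting the $X(z)$-terms yields $X(z)[zK(\alpha)-\lambda\alpha]=x(0)+[(1-\alpha)z+\alpha]U(z)$, and since $p(\alpha)=\lambda\alpha/K(\alpha)$ this rearranges to
\[
X(z)=\frac{x(0)}{K(\alpha)(z-p(\alpha))}+\frac{(1-\alpha)z+\alpha}{K(\alpha)(z-p(\alpha))}\,U(z).
\]

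The key algebraic step is a partial-fraction decomposition of the multiplier of $U(z)$. Writing $(1-\alpha)z+\alpha=(1-\alpha)(z-p(\alpha))+[(1-\alpha)p(\alpha)+\alpha]$ and exploiting the identity $(1-\alpha)\lambda+K(\alpha)=1$, the constant term collapses cleanly to $\alpha/K(\alpha)$, and hence
\[
\frac{(1-\alpha)z+\alpha}{K(\alpha)(z-p(\alpha))}=\frac{1-\alpha}{K(\alpha)}+\frac{\alpha}{K^2(\alpha)(z-p(\alpha))}.
\]
I then invert term by term, using $\mathcal{L}^{-1}\{1/(z-p(\alpha))\}=e_{p(\alpha)}(\cdot,0)$ and the convolution theorem on time scales from \cite{Bohner_Lap}. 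The first summand of $X(z)$ produces $\frac{x(0)}{K(\alpha)}e_{p(\alpha)}(t,0)$; the constant piece of the $U$-multiplier contributes $\frac{1-\alpha}{K(\alpha)}u(t)$; and the remaining rational piece contributes the convolution $\frac{\alpha}{K^2(\alpha)}\int_0^t e_{p(\alpha)}(t,\sigma(\tau))u(\tau)\Delta\tau$, which is already the last term of \eqref{solution}.

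To reconcile the raw Laplace output with the symmetric form of \eqref{solution}, I would impose the initial condition $x(0)=x_0$ using the recipe of Section~\ref{subsec:3.3}. Rewriting $\frac{x(0)}{K(\alpha)}e_{p(\alpha)}(t,0)=x(0)-\frac{x(0)}{K(\alpha)}\bigl(1-e_{p(\alpha)}(t,0)\bigr)+\frac{x(0)(K(\alpha)-1)}{K(\alpha)}$ and absorbing the residual constants, together with the $\frac{1-\alpha}{K(\alpha)}u(0)$ produced by the constant-of-integration ambiguity identified in Section~\ref{subsec:3.3}, regroups the expression into the pairs $1-e_{p(\alpha)}(t,0)$ and $u(t)-u(0)$ that appear in \eqref{solution}. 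Uniqueness is inherited from the injectivity of the Laplace transform on time scales. I expect the genuine obstacle to be exactly this last bookkeeping step: since the Caputo--Fabrizio derivative vanishes identically at $t=0$ by construction, the naive inverse-Laplace formula does not automatically satisfy $x(0)=x_0$, and the free additive constant flagged in Section~\ref{subsec:3.3} has to be calibrated so that the final representation matches \eqref{solution} exactly, while the Laplace manipulations and the partial-fraction algebra are routine.
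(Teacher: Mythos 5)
Your proposal follows essentially the same route as the paper: apply the time-scale Laplace transform to \eqref{scalar} via Lemma~\ref{transform} with $M(\alpha)=1$, do the partial-fraction algebra, invert term by term to get the raw formula \eqref{sol1}, and then fix the free additive constant (the paper's $C=\bigl(1-\frac{1}{K(\alpha)}\bigr)x(0)-\frac{1-\alpha}{K(\alpha)}u(0)$) so that the initial condition $x(0)=x_0$ is met, which is exactly how \eqref{solution} is obtained; you merely spell out the partial-fraction step the paper leaves implicit. The only blemish is a sign slip in your intermediate rewriting of $\frac{x(0)}{K(\alpha)}e_{p(\alpha)}(t,0)$, where the last term should read $-\frac{x(0)(K(\alpha)-1)}{K(\alpha)}$; this is harmless, since it is precisely the residual constant you then absorb into the calibration.
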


\begin{proof}
Using the Laplace transform method,
its inverse, and the methods of \cite{Bohner_Lap}, we obtain 
the formula of solution as 
\begin{equation}
\label{sol1}
x(t)=\frac{1}{K(\alpha)}e_{p(\alpha)}(t,0)x(0)+\frac{1-\alpha}{K(\alpha)}u(t)
+  \frac{\alpha}{K^2(\alpha)}\int_0^t
e_{p(\alpha)}(t,\sigma(\tau))u(\tau)\Delta \tau,
\end{equation}
while the formula
\begin{equation}
\label{sol2}
\tilde{x}(t)=\frac{1}{K(\alpha)}e_{p(\alpha)}(t,0)x(0)
+\frac{1-\alpha}{K(\alpha)}u(t)
+ \frac{\alpha}{K^2(\alpha)}\int_0^t
e_{p(\alpha)}(t,\sigma(\tau))u(\tau)\Delta \tau + C
\end{equation}
is also a solution of equation
\eqref{scalar} subject to the initial condition $x(0)=x_0$. 
To have uniqueness, we need to calculate 
$C=\left(1-\frac{1}{K(\alpha)}\right)x(0)-\frac{1-\alpha}{K(\alpha)}u(0)$, 
which together with equation \eqref{sol2} agrees with the form 
given by \eqref{solution}.
\end{proof}

\begin{corollary}
Let $\alpha\in [0,1)$ and $x(\cdot)$ be a solution of equation \eqref{scalar}
with $\lambda=0$, i.e., of equation  $\left(^{CF}\Delta_t^{(\alpha)} x\right)(t)
=u(t)$, $t\in \T$, subject to the initial condition $x(0)=x_0\in\R$. Then,
$\lambda$ is $CF(\alpha)$-fractionally regressive for any $\alpha \in [0,1)$
and $x(t)=x_0+(1-\alpha)(u(t)-u(0))+\alpha\displaystyle \int_0^tu(\tau)\Delta\tau$.
Moreover, for $u(t)\equiv 0$, we have the constant function $x(t) \equiv x_0$ for any $\alpha$.
\end{corollary}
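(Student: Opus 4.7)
The plan is to specialize the general solution formula \eqref{solution} from the preceding theorem to the case $\lambda = 0$. Since the theorem has already delivered the closed-form solution of \eqref{scalar} under the regressivity condition $K(\alpha) \neq 0$, the corollary amounts to verifying that this condition is automatic when $\lambda = 0$ and then collapsing the formula to the announced expression.

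First, I would check the $CF(\alpha)$-fractional regressivity of $\lambda = 0$. By definition, $K(\alpha) = 1 - \lambda(1-\alpha)$, so for $\lambda = 0$ one has $K(\alpha) = 1 \neq 0$ for every $\alpha \in [0,1)$. This proves the first claim of the corollary and, in particular, authorizes the use of the previous theorem for this $\lambda$ on any unbounded time scale $\T$ containing $t_0 = 0$.

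Next, I would compute the regressive coefficient $p(\alpha) = \lambda \alpha / K(\alpha) = 0$ and note that the time-scale exponential function satisfies $e_0(t,s) \equiv 1$ for all admissible $t, s \in \T$; this is immediate from the characterization of $e_0(\cdot, 0)$ as the unique solution of $x^\Delta = 0$ with $x(0) = 1$, or equally from the two explicit formulas in the definition of $e_p$ by setting $p \equiv 0$. Substituting $K(\alpha) = 1$, $e_{p(\alpha)}(t, 0) = 1$, and $e_{p(\alpha)}(t, \sigma(\tau)) = 1$ into \eqref{solution}, the term $-\frac{1}{K(\alpha)}(1 - e_{p(\alpha)}(t,0))x(0)$ vanishes and the integral reduces to $\int_0^t u(\tau)\, \Delta \tau$, yielding
\begin{equation*}
x(t) = x_0 + (1 - \alpha)(u(t) - u(0)) + \alpha \int_0^t u(\tau)\, \Delta \tau,
\end{equation*}
which is the announced formula. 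The last assertion, $x(t) \equiv x_0$ when $u \equiv 0$, follows by inspection from this expression.

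There is no real obstacle in the proof: the work has all been carried out in the theorem, and the corollary is a clean substitution serving to emphasize that for $\lambda = 0$ the Caputo--Fabrizio solution formula recovers, on the time scale $\T$, the expected combination of the input $u$ and its delta antiderivative, consistently with the fractional delta-integral introduced in Section~\ref{subsec:3.3}.
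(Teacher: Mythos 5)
Your proof is correct and follows the same route the paper intends: the corollary is a direct specialization of the solution formula \eqref{solution}, using $K(\alpha)=1$ for $\lambda=0$ and $e_{p(\alpha)}\equiv 1$ since $p(\alpha)=0$, which collapses the formula to the stated expression (and matches the computation already carried out in Section~\ref{subsec:3.3} for equation \eqref{non1}). No gaps.
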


\begin{remark}
Observe that solutions to equation \eqref{scalar} 
are continuous/bounded, if $u(\cdot)$ is a continuous/bounded function.
\end{remark}

Based on the results of P\"otzsche et al. \cite{Christian},
we make an analysis of the exponential stability of equation 
\eqref{scalar} in case of a constant function $u(\cdot)$. 
In this case, in the formula of solution \eqref{sol1}, 
it disappears the part $u(t)-u(0)$. In view of the definitions given in \cite{Christian}, 
the following notations for describing the sets of exponential stability are introduced.
For simplicity, we take $t_0=0\in\T$.

\begin{definition}[See \cite{Christian}]
Given a time scale $\T$ unbounded from above, we denote
\begin{equation*}
{S}_\mathbb{C}(\mathbb{T}):=\left\{\lambda\in\mathbb{C} :
\limsup_{T\to \infty} \frac{1}{T-t_0} \int\limits_{t_0}^T
\lim\limits_{s \to \mu(t)}\frac{\log | 1+s\lambda |}{s} \, 
\Delta t <0\right\}
\end{equation*}
and
\begin{equation*}
{S}_\mathbb{R}(\mathbb{T}):=\left\{\lambda\in\mathbb{R} :\ 
\forall T\in\mathbb{T}\ \ \exists t\in\mathbb{T},\  t>T \ :\
1+\mu(t)\lambda=0\right\}.
\end{equation*}
The set of exponential stability for the time scale $\T$ is then defined by
\begin{equation*}
{S}(\mathbb{T}):=
{S}_\mathbb{C}(\mathbb{T})\cup {S}_\mathbb{R}(\mathbb{T}).
\end{equation*}
\end{definition}

\begin{remark}
For an arbitrary time scale $\mathbb{T}$, we have
\(S_\mathbb{R} (\mathbb{T}) \subset (-\infty, 0)\)
and
\(S_\mathbb{C} (\mathbb{T}) \subseteq \left\{\lambda\in \mathbb{C}
: \ Re \, \lambda<0 \right\}\).
\end{remark}

\begin{proposition}
\label{thm:exp:stab}
Let $\T$ be unbounded from above, $p(\alpha)$ be regressive and 
$u(\cdot)$ be a constant function. Then, equation \eqref{scalar} 
is exponentially stable if and only if $p(\alpha)\in S_C(\T)$.
\end{proposition}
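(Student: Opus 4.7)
The plan is to reduce the exponential stability question to the asymptotic behaviour of a single scalar time-scale exponential $e_{p(\alpha)}(\cdot,0)$, and then invoke the characterisation of exponential stability of regressive scalar equations on time scales due to P\"otzsche, Siegmund and Wirth~\cite{Christian}.

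First I would substitute a constant input $u(\cdot)\equiv u_0$ into the closed-form solution \eqref{solution}. Since $u(t)-u(0)=0$, the solution immediately collapses to
\begin{equation*}
x(t)=x(0)-\frac{1}{K(\alpha)}\bigl(1-e_{p(\alpha)}(t,0)\bigr)x(0)
+\frac{\alpha u_0}{K^2(\alpha)}\int_0^t e_{p(\alpha)}(t,\sigma(\tau))\,\Delta\tau .
\end{equation*}
Next I would evaluate the remaining convolution-type integral explicitly. Using the semigroup identity $e_{p(\alpha)}(t,\sigma(\tau))=e_{p(\alpha)}(t,0)\,e_{p(\alpha)}(0,\sigma(\tau))$ together with the standard time-scale rule $\bigl(e_{p(\alpha)}(0,\cdot)\bigr)^{\Delta}(\tau)=-p(\alpha)\,e_{p(\alpha)}(0,\sigma(\tau))$, one finds
\begin{equation*}
\int_0^t e_{p(\alpha)}(t,\sigma(\tau))\,\Delta\tau
=\frac{1}{p(\alpha)}\bigl(e_{p(\alpha)}(t,0)-1\bigr),
\end{equation*}
valid whenever $p(\alpha)\neq 0$; the degenerate case $\lambda=0$ is already handled by the preceding Corollary. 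Plugging this back in, the solution takes the affine form $x(t)=C_0+C_1\,e_{p(\alpha)}(t,0)$ for constants $C_0,C_1\in\R$ depending only on $x(0),u_0,\lambda,\alpha$.

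Thus the asymptotic behaviour of every solution of \eqref{scalar} with constant forcing is governed entirely by the single scalar exponential $e_{p(\alpha)}(\cdot,0)$, and \eqref{scalar} is exponentially stable if and only if $e_{p(\alpha)}(t,0)\to 0$ exponentially as $t\to\infty$. Because $p(\alpha)$ is regressive by hypothesis, I would close the argument by invoking the result of~\cite{Christian}: for a regressive $\lambda$, the time-scale exponential $e_{\lambda}(\cdot,0)$ decays exponentially on a time scale unbounded from above precisely when $\lambda\in S_{\mathbb{C}}(\T)$. Specialising $\lambda=p(\alpha)$ yields the stated equivalence. The step requiring the most care is the explicit evaluation of the convolution integral on a general time scale and the resulting identification of the solution with an affine function of $e_{p(\alpha)}(\cdot,0)$; once that is done, the whole proposition reduces to a direct citation of~\cite{Christian}.
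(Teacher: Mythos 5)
Your proposal is correct and follows essentially the same route as the paper: the paper's proof is a one-line appeal to the argument of Theorem~21 in P\"otzsche, Siegmund and Wirth \cite{Christian}, which is exactly the characterisation you invoke after noting that regressivity of $p(\alpha)$ rules out the $S_{\mathbb{R}}(\T)$ part. Your explicit evaluation of $\int_0^t e_{p(\alpha)}(t,\sigma(\tau))\,\Delta\tau=\frac{1}{p(\alpha)}\bigl(e_{p(\alpha)}(t,0)-1\bigr)$ and the resulting affine form $x(t)=C_0+C_1 e_{p(\alpha)}(t,0)$ simply fill in the reduction that the paper leaves implicit, so it is a valid (and more detailed) rendering of the same argument.
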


\begin{proof}
The result is obtained following the proof of \cite[Theorem~21]{Christian}.
\end{proof}

\begin{example}
Let $\alpha\in (0,1)$ and $M(\alpha)=1$.
We consider here two classical time scales.
\begin{enumerate}
\item[(1)] Let $\mathbb{T}=h\mathbb{Z}$, where $h>0$.
Then
\(
{S}_\mathbb{R}(h\mathbb{Z})=\left\{-\frac{1}{h}\right\}
\)
and
\(
{S}(h\mathbb{Z})=\mathcal{B}_{\frac{1}{h}}\left(-\frac{1}{h}\right)
\),
where  $\mathcal{B}_{\frac{1}{h}}\left(-\frac{1}{h}\right)$
denotes the disc with center at $\left(-\frac{1}{h},0\right)$
and radius $\frac{1}{h}$. Then, the stability region for
parameter $p(\alpha)$ is $S(h\Z)=B_{\frac{1}{h}}\left(-\frac{1}{h}\right)$.
Hence, it follows from Proposition~\ref{thm:exp:stab} that for real values 
of $p(\alpha)$, a necessary and sufficient condition for the exponential
stability of equation \eqref{scalar} is
\begin{itemize}
\item[(a)] $\lambda\in\left(-\frac{2}{h\alpha-2(1-\alpha)},0\right)$,
for $h>2\left(\frac{1}{\alpha}-1\right)$;

\item[(b)] $\lambda\in (-\infty, 0)\cup \left(\frac{2}{2(1-\alpha)
-h\alpha},+\infty\right)$, for $h\leq 2\left(\frac{1}{\alpha}-1\right)$.
\end{itemize}
Observe that for $\alpha$ tending to $1$ only item (a)
is possible, and then the condition agrees with
$\lambda\in \left(-\frac{2}{h},0\right)$.

\item[(2)] Let $\mathbb{T}=\mathbb{R}$. Then
\(
{S}_\mathbb{R}(\mathbb{R})=\emptyset
\)
and
\(
{S}(\mathbb{R})=\{\lambda\in\mathbb{C}:\ Re\lambda<0\}.
\)
Equation \eqref{scalar} is exponentially stable if and only if
$\lambda <0$ or $\lambda>\frac{1}{1-\alpha}$.
\end{enumerate}
\end{example}

\begin{example}
Let $\alpha\in (0,1]$,
$K(\alpha):=1- \lambda(1-\alpha)\neq 0$,
and consider the initial condition $x(0)=0\in\T$. 
Then, the unique solution of the initial value problem
has the form
\begin{equation}
\label{ex1}
x(t)=\frac{1-\alpha}{K(\alpha)}(u(t)-u(0))
+\frac{\alpha}{K^2(\alpha)}\int_0^t
e_{p(\alpha)}(t,\sigma(\tau))u(\tau)\Delta \tau
\end{equation}
with $p(\alpha)=\frac{\lambda\alpha}{K(\alpha)}$.
Considering $\T=h\Z$, $t=kh$, we have that
$$
\int_0^t
e_{p(\alpha)}(t,\sigma(\tau))u(\tau)\Delta \tau
=h\sum_{s=0}^{k-1}\left(1+hp(\alpha)\right)^{k-s-1}u(sh)
$$
holds. We present graphs with the behavior of the exact 
solution in different situations:
\begin{itemize}
\item[(a)] For $\T=\Z$, $\lambda=0.2$, $u(t)\equiv 1$,  
and $\alpha\in\{0.2,0.5,0.9,1\}$, we get different exponents 
tending to infinity, what we see for first 30 steps in Figure~\ref{rys1}. 
The solutions are unstable.
\begin{figure}[ht!]
\centering
\includegraphics[scale=0.6]{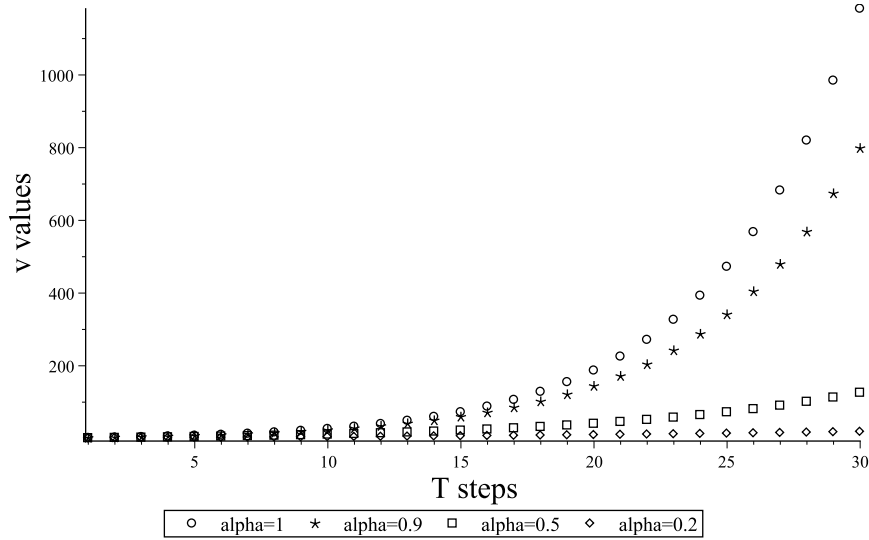}
\caption{Graphs of solutions to the linear equation \eqref{scalar} with  
$\lambda=0.2$, $x(0)=0$, $u(t)\equiv 1$, given by \eqref{ex1},  
for $\alpha\in\{0.2,0.5,0.9,1\}$ on $\T=\Z$, for first 30 steps.
\label{rys1}}
\end{figure}
\item[(b)] If we would like to receive stable solutions, then 
we can take, for $\T=\Z$, negative $\lambda$ or big enough $\lambda$. 
Taking $\lambda=4.2$ we have stable solutions for $\alpha\in\{0.2,0.5\}$ 
while for $\alpha>4$ the solution is stable for $\alpha=0.5$. If $u(t)\equiv 1$, 
then there is now a first part in summation in the solution's formula \eqref{ex1}. 
In this case, the values of the control function $u(\cdot)$ do not influence the stability.  
The situation is presented in Figure~\ref{rys2}.
\begin{figure}[ht!]
\centering
\includegraphics[scale=0.6]{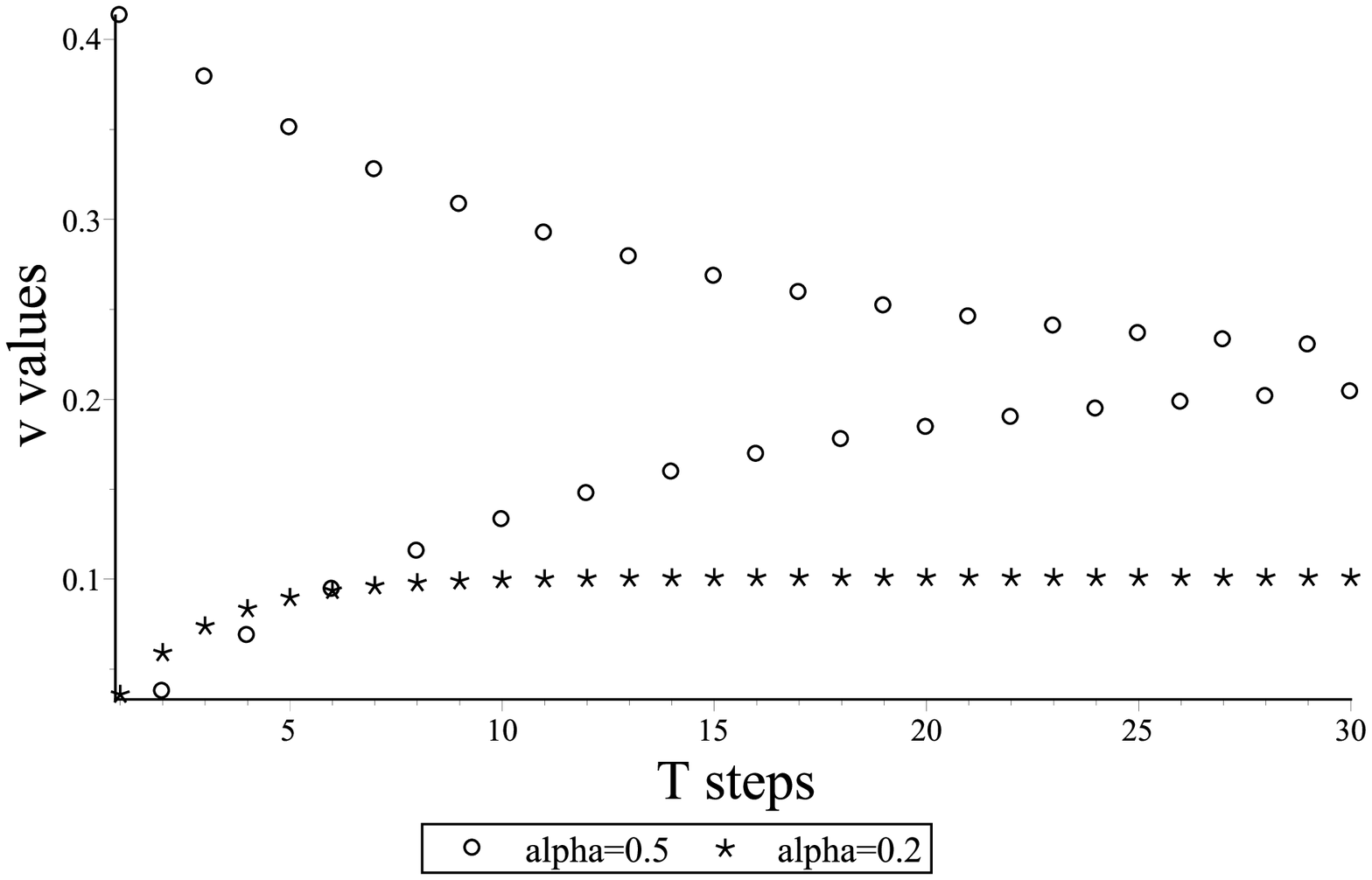}
\caption{Graphs of solutions to the linear equation \eqref{scalar} with  
$\lambda=4.2$, $x(0)=0$, $u(t)\equiv 1$, given by \eqref{ex1},  
for $\alpha\in\{0.2,0.5\}$ on $\T=\Z$, for first 30 steps.
\label{rys2}}
\end{figure}
\item[(c)] In Figure~\ref{rys3}, we compare graphs with $\lambda=0.2$ 
for different steps $h\in\{0.1,0.5,1\}$ and  we take order $\alpha=0.5$. 
If we include the situation when $h \rightarrow 0$, which in the limit 
represents $\T=\R$, we will receive the line on dots for $h=0.1$.
\begin{figure}[ht!]
\centering
\includegraphics[width=0.80\textwidth]{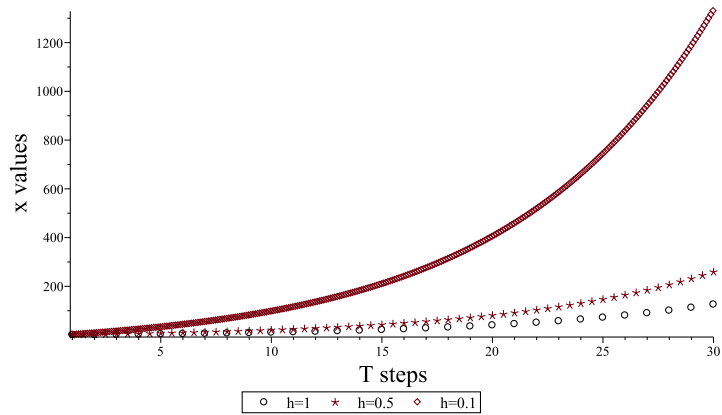}
\caption{Graphs of solutions to the linear equation \eqref{scalar} with  
$\lambda=0.2$, $x(0)=0$, given by \eqref{ex1}, with 
$u(t)\equiv 1$ for $h\in\{0.1,0.5,1\}$ and $\alpha=0.5$, for first 30 steps.
\label{rys3}}
\end{figure}
\end{itemize}
\end{example}


\subsection{Nonlinear CF delta-fractional differential equations}
\label{subsec:3.4}

Under suitable assumptions on function $f(\cdot,x(\cdot))$ of equation
\begin{equation}
\label{non2}
\left(^{CF}_a\Delta_t^{(\alpha)} x\right)(t)=f(t,x(t)),
\quad x(a)=x_0\in \R, \quad  t\in [a,b]\cap \T,
\end{equation}
we can extend our results to the nonlinear case
by using a fixed point theorem.

\begin{definition}[Cf. \cite{Bohner}]
Let $\T$ be a time scale. A function $f: \T\times \R\rightarrow\R$ is called
\begin{itemize}
\item[(i)] rd-continuous, if $g$ defined by $g(t)=f(t,x(t))$ is rd-continuous
for any continuous function $x:\T\rightarrow \R$;

\item[(ii)] regressive at $t\in\T^{\kappa}$, if the mapping
$1+\mu(t)f(t,\cdot):\R\rightarrow \R$ is invertible;
and $f$ is called regressive on $\T^\kappa$,
if $f$ is regressive at each $t\in\T^\kappa$;

\item[(iii)] bounded on a set $S\subset \T\times \R$,
if there exists a constant $M>0$ such that
$|f(t,x)|\leq M$ for all $(t,x)\in S$;

\item[(iv)] Lipschitz continuous on a set $S\subset \T\times \R$,
if there is a constant $L>0$ such that $\left|f(t,x_1)-f(t,x_2) \right|
\leq L\left| x_1-x_2\right|$ for all $(t,x_1), (t,x_2)\in S$.
\end{itemize}
\end{definition}

\begin{theorem}
Let $\alpha\in(0,1)$, $b>a$, and $f:[a,b]\cap\T\times \R\rightarrow \R$
be a rd-continuous function such that there exists $L>0$ such that
\begin{equation*}
|f(t,x_1)-f(t,x_2)|\leq L|x_1-x_2|\, \ \ \mbox{for all} \  x_1,x_2\in \R.
\end{equation*}
If $((1-\alpha)+\alpha (b-a))L<1$, then the initial value problem
\eqref{non2} has a unique solution on $[a,b]\cap \T$.
\end{theorem}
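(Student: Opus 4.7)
The plan is to recast the initial value problem \eqref{non2} as a fixed point equation on the Banach space $C_{rd}([a,b]\cap\T,\R)$ equipped with the supremum norm, and then apply Banach's contraction principle. The contractivity constant in the assumption $((1-\alpha)+\alpha(b-a))L<1$ is essentially a preview of the Lipschitz estimate we will derive for the fixed-point operator.

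First I would convert the differential equation into an integral equation by applying the Caputo--Fabrizio fractional delta integral (with $M(\alpha)=1$ as fixed in Section~\ref{subsec:3.3}) to both sides of \eqref{non2}. Mimicking the derivation that leads to the definition of $\,^{CF}I^\alpha$, any solution of \eqref{non2} satisfies
\begin{equation*}
x(t) = x_0 + (1-\alpha)\bigl(f(t,x(t)) - f(a,x_0)\bigr)
+ \alpha \int_a^t f(\tau,x(\tau))\,\Delta\tau,
\quad t\in [a,b]\cap\T.
\end{equation*}
Conversely, applying $\,^{CF}_a\Delta_t^{(\alpha)}$ to the right-hand side recovers $f(t,x(t))$, so the IVP and this integral equation are equivalent on the class of rd-continuous functions; this equivalence is the first thing to verify carefully.

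Next, I would define the operator $T\colon C_{rd}([a,b]\cap\T,\R)\to C_{rd}([a,b]\cap\T,\R)$ by
\begin{equation*}
(Tx)(t) := x_0 + (1-\alpha)\bigl(f(t,x(t)) - f(a,x_0)\bigr)
+ \alpha \int_a^t f(\tau,x(\tau))\,\Delta\tau.
\end{equation*}
Rd-continuity of $Tx$ follows from the rd-continuity of $f(\cdot,x(\cdot))$ and the fact that the indefinite delta integral of an rd-continuous function is continuous on $[a,b]\cap\T$. For the contraction estimate, for any $x_1,x_2\in C_{rd}([a,b]\cap\T,\R)$ and $t\in[a,b]\cap\T$, the Lipschitz hypothesis on $f$ gives
\begin{equation*}
|(Tx_1)(t)-(Tx_2)(t)|
\le (1-\alpha)L\,|x_1(t)-x_2(t)|
+ \alpha L \int_a^t |x_1(\tau)-x_2(\tau)|\,\Delta\tau
\le \bigl((1-\alpha)+\alpha(b-a)\bigr)L\,\|x_1-x_2\|_\infty.
\end{equation*}
Taking the supremum in $t$ and invoking the hypothesis $((1-\alpha)+\alpha(b-a))L<1$, we conclude that $T$ is a contraction. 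Banach's fixed point theorem then yields a unique fixed point $x^\ast\in C_{rd}([a,b]\cap\T,\R)$, which by the equivalence established in step one is the unique solution of the IVP.

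The main technical obstacle, I expect, is not the contraction estimate (which is almost immediate from the Lipschitz condition) but the rigorous verification that applying the CF fractional integral to \eqref{non2} produces exactly the integral equation above. This requires the inversion formula $\,^{CF}I^\alpha\bigl(\,^{CF}_a\Delta_t^{(\alpha)}x\bigr)(t)=x(t)-x_0$, which was motivated on the time-scale level in Section~\ref{subsec:3.3} via the Laplace transform but needs to be justified pointwise on $[a,b]\cap\T$ under the regularity available to rd-continuous fixed points; once this is secured, the remainder of the argument is the routine contraction-mapping scheme.
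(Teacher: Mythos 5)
Your proposal is correct and follows essentially the same route as the paper: the operator you define is exactly the paper's fixed-point operator $\mathcal{N}$ (written, in fact, more consistently with base point $a$ where the paper writes $x(0)$ and $f(0,x(0))$), and your contraction estimate and appeal to Banach's fixed point theorem coincide with the paper's proof. Your extra remark on rigorously establishing the equivalence between the IVP and the integral equation addresses a step the paper leaves implicit, but it does not alter the underlying argument.
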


\begin{proof}
Let $||f||=\sup\limits_{t\in[a,b]\cap \T}\left|f(t)\right|$ for all
rd-continuous functions on $[a,b]\cap \T$, that is,
for all $f \in \mathcal{C}_{rd}([a,b]_{\T})$.
Consider the operator $\mathcal{N}: \mathcal{C}_{rd}([a,b]_{\T})
\to\mathcal{C}_{rd}([a,b]_{\T})$ defined by
\begin{equation*}
\left(\mathcal{N} x\right) (t)=x(0)
+\alpha \int_a^t f(\tau,x(\tau))\Delta \tau
+(1-\alpha)\left(f(t,x(t))-f(0,x(0))\right),
\end{equation*}
$x\in \mathcal{C}_{rd}([a,b]_{\T})$.
Because for all $x_1,x_2\in \mathcal{C}_{rd}([a,b]_{\T})$
and all $t\in [a,b]_{\T}$ we have
\begin{equation*}
\begin{split}
& \left|\left(\mathcal{N} x_1\right) (t) -\left(\mathcal{N} x_2\right) (t)\right|\\
&\quad \leq (1-\alpha) \left| f(t,x_1(t))-f(t,x_2(t))\right| +\alpha \int_a^t
\left| f(\tau,x_1(\tau))-f(\tau,x_2(\tau))\right| \Delta \tau \\
&\quad \leq (1-\alpha)L||x_1-x_2||+\alpha L(b-a)||x_1-x_2||\\
&\quad \leq  \left(1-\alpha +\alpha(b-a) \right)L||x_1-x_2||,
\end{split}
\end{equation*}
we conclude that the operator $\mathcal{N}$ is a contraction.
The statement follows from Banach's fixed point theorem.
\end{proof}


\section{Conclusion}
\label{sec:conc}

The theory of fractional differential equations on time scales,
specifically the questions of existence, uniqueness of solutions
and their stability, is a recent research topic of great importance, 
see \cite{MyID:379,MyID:403} and references therein. Here, we studied 
deterministic fractional operators on time scales with Caputo--Fabrizio 
type kernels. The main difference is seen in solutions, where the 
language of integrals and exponentials on time scales is used.
Such non-singular kernels have recently been applied successfully to some
real world problems \cite{Thabet2,Abdon,Caputo:Fabrizio,CF1,Nieto}.
For this reason, we trust that the general concepts and calculus
here introduced will initiate further interest and developments.
As possible future work, motivated respectively by the recent
techniques and ideas of \cite{MR3614829} and \cite{MyID:365},
we mention: proof of chain rules and inequalities, and
existence and uniqueness results of \emph{positive solutions}.


\subsubsection*{Acknowledgments}

The work of D.~Mozyrska and M.~Wyrwas was supported by 
Bialystok University of Technology, grant
S/WI/1/2016, and funded by the resources for research 
of Polish Ministry of Science and Higher Education.
The work of D.~F.~M.~Torres has been partially supported
by CIDMA and FCT within project UID/MAT/04106/2013.
The authors are grateful to an Associated Editor 
and three anonymous Reviewers for thorough comments, 
which showed them how to modify the manuscript 
in order to improve its quality.
 

\small



\end{document}